\documentclass[12pt, letterpaper,reqno]{amsart}
\usepackage[utf8]{inputenc}
\usepackage{amsmath}
\usepackage{amssymb}
\usepackage{datetime}
\usepackage{fancyhdr}
\usepackage{graphicx}
\usepackage{latexsym}
\usepackage{pinlabel}
\usepackage{fullpage}
\usepackage{enumitem}
\usepackage[unicode]{hyperref}
\usepackage{todonotes}

\usepackage{float}

\usepackage{listings}
\usepackage{xcolor}

\hypersetup{
    unicode=false,          
    pdftoolbar=true,        
    pdfmenubar=true,        
    pdffitwindow=false,     
    pdfstartview={FitH},    
    pdftitle={},    
    pdfauthor={Author},     
    pdfnewwindow=true,      
    colorlinks=true,       
    linkcolor=red,          
    citecolor=blue,        
    filecolor=magenta,      
    urlcolor=cyan,           
}

\newtheorem{theorem}{Theorem}[section]

\newtheorem{remark}[theorem]{Remark}
\newtheorem{example}[theorem]{Example}

\newcommand{\C}{{\mathbb C}}

\newcommand{\bit}{\begin{itemize}}
\newcommand{\eit}{\end{itemize}}

\newcommand{\ben}{\begin{enumerate}}
\DeclareRobustCommand{\een}{ \end{enumerate} }

\title{Construction and Periodicity of Minimal Resolution Graphs of Suspension Singularities}
\author[S.\ Sakall{\i}]{S\"umeyra Sakall{\i}}
\email{sumeyras@usf.edu}\urladdr{https://sites.google.com/view/smyrsakalli/home}
\address{Department of Mathematics and Statistics, University of South Florida, Tampa, FL, 33620, USA}
\author[C. \ Salles]{Claudia Salles}
\email{csallesgallo@usf.edu}
\address{University of South Florida, Tampa, FL, 33620, USA}

\begin{document}
\begin{abstract} 
We develop computational tools for constructing the minimal resolution graphs of suspension singularities. In particular, we provide a Magma program that implements Némethi’s algorithm that can be used to construct the minimal resolution graph of any suspension singularity. The program outputs the resulting resolution graph as structured graph data, encoding the vertices, edges, and decorations of the graph. We further develop a SageMath program that takes this data as input and produces graphical representations of the resolution graphs, facilitating their visualization and analysis. For three families of suspension singularities, we prove that the corresponding minimal resolution graphs are periodic, with periods equal to the orders of the corresponding monodromy factorizations. We also provide Magma programs for generating compactified resolution graphs of suspension singularities and checking graph isomorphisms over a specified range of parameters.
\end{abstract}

\keywords{Complex singularities, Symplectic 4-manifolds, Némethi’s algorithm, Monodromy factorizations}
\subjclass{57K43, 53D05, 32S30, 14J17}

\maketitle
\section{Introduction}
Singular fibers in holomorphic genus two fibrations were completely classified by Namikawa--Ueno \cite{NamikawaUeno-list, NamikawaUeno-long}. In \cite{SV} and \cite{SV2} we worked with Namikawa--Ueno's fibers with finite order monodromies which are analogous to Kodaira's fibers \cite{Kodaira} in holomorphic genus one fibrations. 

Recall that for $f: (\C^2,0) \rightarrow (\C,0)$ an isolated plane curve singularity, hypersurface singularities of the form $(\{f(x,y)+t^k=0\},0) \subset (\C^3,0)$ are called suspension singularities. In (\cite{Nemethi-lectures}, III. Appendix 1) Némethi provides an algorithm for determining the minimal resolution graphs of suspension singularities. Recall also that a minimal resolution graph means that the graph contains no vertices corresponding to $(-1)$-spheres.

We observed in \cite{SV} that the Namikawa--Ueno fibers under consideration are of suspension type. We reconstructed most of these fibers by applying Némethi's algorithm to their defining polynomials. In \cite{SV2}, we reconstructed the remaining four types by determining their defining polynomials different from those proposed by Namikawa and Ueno. In each of these papers, we then carefully deformed the fibrations into Lefschetz fibrations and computed their monodromy factorizations, providing effective tools for constructing new exotic 4-manifolds. 

In constructing the Namikawa-Ueno fibers, we manually applied Némethi's algorithm to all 37 members of the families introduced in \cite{SV} and \cite{SV2}: 

\begin{equation} \label{fam}
\begin{aligned}
\phi_1^k &: y^2 = x^5+t^k, \quad k=1,\dots, 10, \\
\phi_2^k &: y^2 = x(x^4+t^k), \quad k=1,\dots, 8,  \\
\phi_3^k &: y^2 = x^6+t^k, \quad k=1,\dots, 6,  \\
\phi_4^k &: y^2 = x(x^5+t^k), \quad k=1,\dots, 5,\\
\psi_1^k &: t^k = x(x^3-y^2), \quad k=1,\dots, 8.
\end{aligned}
\end{equation}
To the best of our knowledge, no existing computer program implementing Némethi's algorithm is available in the literature. Although these computations are straightforward in principle, carrying them out by hand is a highly time-consuming process (see also, for example, the related discussion in \cite{MathOverflow}). Moreover, a manual approach is not feasible for computations involving higher-degree polynomials.

This motivates the development of computational tools that automate Némethi's algorithm and make these calculations feasible for arbitrary suspension singularities, including those defined by high-degree polynomials. In this paper, we develop these tools, providing an efficient and reproducible framework for computations that would otherwise require extensive calculations by hand. In \autoref{Magma1} we present a Magma program \verb|PrintGraphwithArrows.m| for computing the minimal resolution graphs of suspension singularities. This program implements Némethi's algorithm and outputs the resulting resolution graphs as graph data, consisting of lists and dictionaries that encode the vertices, edges, and decorations of the
graph. We then provide a SageMath program \verb|PlotGraph (Sage)| that takes the resulting data as input and produces graphical representations of the corresponding resolution graphs, allowing for their effective visualization and analysis. The programs \verb|PrintGraphwithArrows.m| and \verb|PlotGraph (Sage)| can be used to construct the minimal resolution graph of \emph{any} suspension singularity. In \autoref{Theorem}, we prove that for each of the following infinite families of suspension singularities:
\begin{equation} 
\begin{aligned}
\phi_1^k &: y^2 = x^5+t^k, \quad k \in \mathbb{Z}_{>0}, \\
\phi_3^k &: y^2 = x^6+t^k, \quad k \in \mathbb{Z}_{>0},  \\
\psi_1^k &: t^k = x(x^3-y^2), \quad k \in \mathbb{Z}_{>0},
\end{aligned}
\end{equation}
the corresponding compactified minimal resolution graphs are periodic, with the period equal to the order of the corresponding monodromy factorization (see \autoref{T}). These minimal graphs are precisely the fibers in the Namikawa–Ueno classification having finite-order monodromies. Consequently, they exhaust all genus two singular fibers with finite-order monodromies. In \autoref{Magma2}, we provide a Magma program \verb|PrintGraphCompactified.m| that generates the compactified resolution graphs of suspension singularities. In \autoref{Isom}, we provide another Magma program \verb|IsomorphismCheck.m| that checks whether graphs corresponding to the same congruence class modulo a proposed period are isomorphic over a specified range of parameters. These programs facilitate efficient computation and comparison of compactified resolution graphs, providing a framework to explore further examples.

\section{Magma and SageMath Programs}
\label{Magma1}

We first briefly review Némethi's algorithm described in \cite{Nemethi-lectures}, III, Appendix 1. To determine the minimal resolution graph of a suspension singularity $S= (\{f(x,y)+t^k=0\},0)$ in $\mathbb{C}^3$, one first resolves the plane curve singularity $B= (\{f(x,y)=0\},0)$ in $\mathbb{C}^2$ by successive blow ups. This process is well known (see, for instance, \cite{GS}, Section 7.2; alternatively, the well known Magma function \verb|ResolutionGraph| can be used to compute the resolution graph of the plane curve singularity $B$ in the form of a matrix). In the resulting resolution graph $G$, which is a tree, each arrow denotes the proper transform of an irreducible component of the singular curve $B$. Each vertex represents a rational curve that is an exceptional sphere or its proper transform, and an edge between two vertices denotes the intersection of the corresponding curves. 

To find the resolution graph of the suspension singularity $S$, Némethi's algorithm prescribes how to take a special degree-$k$ cover of the graph $G$. Namely, one first determines the preimages of every vertex, arrowhead, and edge of $G$. The preimages of edges are called \emph{strings} and generally contain additional internal vertices. For each vertex of the covering graph, one then computes its self-intersection, multiplicity, and genus.  
Finally, one drops all arrows, and then blows down every exceptional sphere with self-intersection $(-1)$ to obtain the minimal resolution graph of $S$.

The workflow is summarized in the following schematic diagram: 
\[
\boxed{
\begin{aligned}
&\text{Resolve plane curve singularity } f(x,y)
\longrightarrow \text{obtain its resolution graph } G \longrightarrow\\
& \text{apply Némethi's algorithm to } G
\longrightarrow \text{obtain the minimal resolution graph of } f(x,y)+t^k.
\end{aligned}}
\]

\begin{remark}
Although in Némethi's algorithm all arrows are dropped before the final blowdown step, they play an important role in the computation of the corresponding monodromy factorizations and in the constructions of exotic 4-manifolds \cite{SV, SV2}. Therefore, throughout this paper we retain all arrows in the resulting resolution graphs.
\end{remark}

While there is the Magma function \verb|ResolutionGraph| that can be used to compute the resolution graphs of plane curve singularities, we are not aware of an online tool that implements Némethi's algorithm and that computes the resolution graphs of suspension singularities. We now provide two programs implementing Némethi's algorithm: the Magma program \verb|PrintGraphwithArrows.m| for computing the minimal resolution graphs of suspension singularities with the arrows retained, and the SageMath program \verb|PlotGraph (Sage)| for visualizing them. The programs are available at \cite{SSG}. 

The Magma program \verb|PrintGraphwithArrows.m| takes as input a suspension singularity of the form \(t^k=f(x,y)\) and outputs the data of its resolution graph, without requiring the resolution graph of the plane curve singularity $f(x,y)=0$ to be computed separately. To run the program, one first downloads the file \verb|PrintGraphwithArrows.m| from \cite{SSG}. The program contains the following input section:
\begin{lstlisting}
A<x,y> := AffineSpace(Rationals(),2);

// ______________input________________

//Input the value for the power of t:
k := ;
//Input a polynomial in terms of x and y (and k if needed):
curve := ;
\end{lstlisting}
Here, \verb|k|, the exponent of \(t\), is the degree of the cover in Némethi's algorithm, and \verb|curve| specifies the defining polynomial \(f(x,y)\) of the plane curve singularity. To compute the minimal resolution graph of \(t^k=f(x,y)\), one enters the appropriate values of \verb|k| and \verb|curve| in the input section. 

After starting Magma on a computer, the downloaded program is loaded by specifying its location (replacing \verb|/path/to/| with the directory where the file is stored):

\begin{lstlisting}
load "/path/to/PrintGraphwithArrows.m";
\end{lstlisting}


The program can also be run using the online Magma calculator available at 
\url{https://magma.maths.usyd.edu.au/calc/} 
by pasting the contents of \verb|PrintGraphwithArrows.m| into the calculator. We found that the online calculator is capable of handling polynomials of remarkably high degree.

At the end of its output, \verb|PrintGraphwithArrows.m| prints the graph data in a format that can be copied directly into SageMath for visualization. 

To visualize the resulting resolution graph, one downloads \verb|PlotGraph (Sage)| from \cite{SSG}. The end portion of the Magma output that begins immediately after the line
\begin{lstlisting}
Copy the following into SageMath to produce the REDUCED GRAPH
\end{lstlisting}
is copied into \verb|PlotGraph (Sage)| above the line
\begin{lstlisting}
Paste the lines printed by Magma above this line
\end{lstlisting}

Finally, one opens \url{https://sagecell.sagemath.org}, pastes the contents of \verb|PlotGraph (Sage)| into the SageMath cell, and evaluates the code to produce a graphical representation of the resolution graph. In the resulting graph, green disks denote arrows, blue disks denote sphere components, and blue squares denote genus $g$ surfaces with $g>0$. On each component of the plotted graph, the number in brackets denotes its multiplicity, the number in parentheses denotes its self-intersection, and $g$ denotes its genus. (The remaining number is a vertex label used only for identification). For instance, a label of the form $[3]$, $(-2)$, $g:1$ indicates a genus one component with multiplicity $3$ and self-intersection $(-2)$.

\begin{remark} \label{bdown}
As part of the resolution process, the Magma program \verb|PrintGraphwithArrows.m| blows down all $(-1)$-vertices of degree one or two. The only exception in degree two occurs when a $(-1)$-sphere appears at the end of a multigraph and is connected to the same vertex by two parallel edges. In this case, the program does not blow down the $(-1)$-sphere, since after the blowdown we obtain a nodal singularity. Thus, this blowdown must be performed manually. Any remaining $(-1)$-vertices of degree greater than two, if present, must also be blown down manually to obtain the minimal resolution graph. We also note that N\'emethi's algorithm constructs special degree $k$ covers of trees. Consequently, the resulting resolution graphs are trees or multigraphs, but never contain loops. Therefore, the blowdown procedure in \verb|PrintGraphwithArrows.m| assumes that the input graph has no loops.
\end{remark}

We now present applications of our programs to several examples from \cite{SV, SV2}, N\'emethi's lecture notes \cite{Nemethi-lectures}, and also \cite{GS}.

\begin{example}
To construct the minimal resolution graph of the suspension singularity \(t^3=-x^5+y^2\), which is denoted by \(\phi_1^3\) in \cite{SV}, one enters:
{
\normalfont
\begin{lstlisting}
k := 3;
curve := -x^5+y^2;
\end{lstlisting}}

\noindent into the input section of \verb|PrintGraphwithArrows.m|. The final portion of the output is:
{
\normalfont
\begin{lstlisting}
 Copy the following into SageMath to produce the REDUCED GRAPH
vertex_mult = {0: 10, 1: 4, 2: 1, 3: 5, 4: 8, 5: 6, 6: 7, 7: 4, 8: 2}
vertex_selfi = {0: -2, 1: -2, 2: -4, 3: -2, 4: -2, 5: -2, 6: -2, 7: -2, 8: -2}
vertex_genus = {0: 0, 1: 0, 2: 0, 3: 0, 4: 0, 5: 0, 6: 0, 7: 0, 8: 0}
edge_list = [(0,3), (0,4), (0,6), (1,5), (1,8), (2,7), (4,5), (6,7)]
arrow_vertices = [2]
\end{lstlisting}}

The portion of the output above, following the line 

\verb|Copy the following into SageMath to produce the REDUCED GRAPH| 

is then pasted into the SageMath program \verb|PlotGraph (Sage)|. Evaluating it in the online SageMath cell produces the graph shown in Figure \ref{fig:phi13}, which is the minimal resolution graph of \(\phi_1^3\) as computed in \cite{SV} (see the first row, second column of Figure~4 in \cite{SV}).
\begin{figure}
\begin{center}
\includegraphics[width=0.7\linewidth]{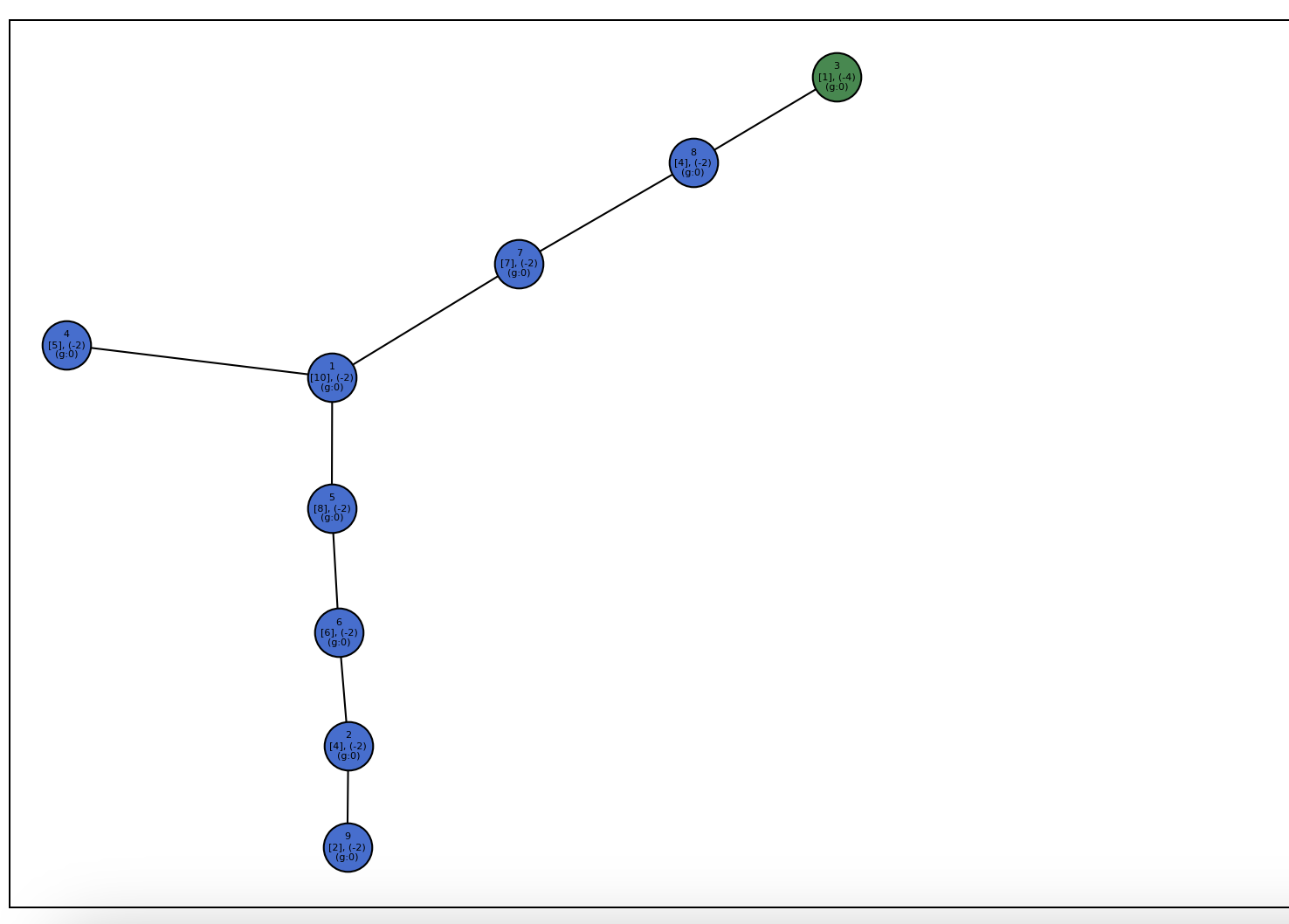}
    \caption{The minimal resolution graph of $\phi_1^3: t^3 = -x^5+y^2$.}
    \label{fig:phi13}
    \end{center}
\end{figure}
\end{example}

\begin{example}\label{ex_psi}
Similarly, we obtain the minimal resolution graph of the singularity $t^6 = x(x^3-y^2)$, denoted by $\psi_1^6$ in \cite{SV2}, using the programs \verb|PrintGraphwithArrows.m| and then \verb|PlotGraph (Sage)|. We show the resulting graph in Figure \ref{psi1_6_wa}. Note that this graph is the graph given in the third row, first column of Figure 5 in \cite{SV2}.
\begin{figure}[ht]
 \includegraphics[width=4.5in]{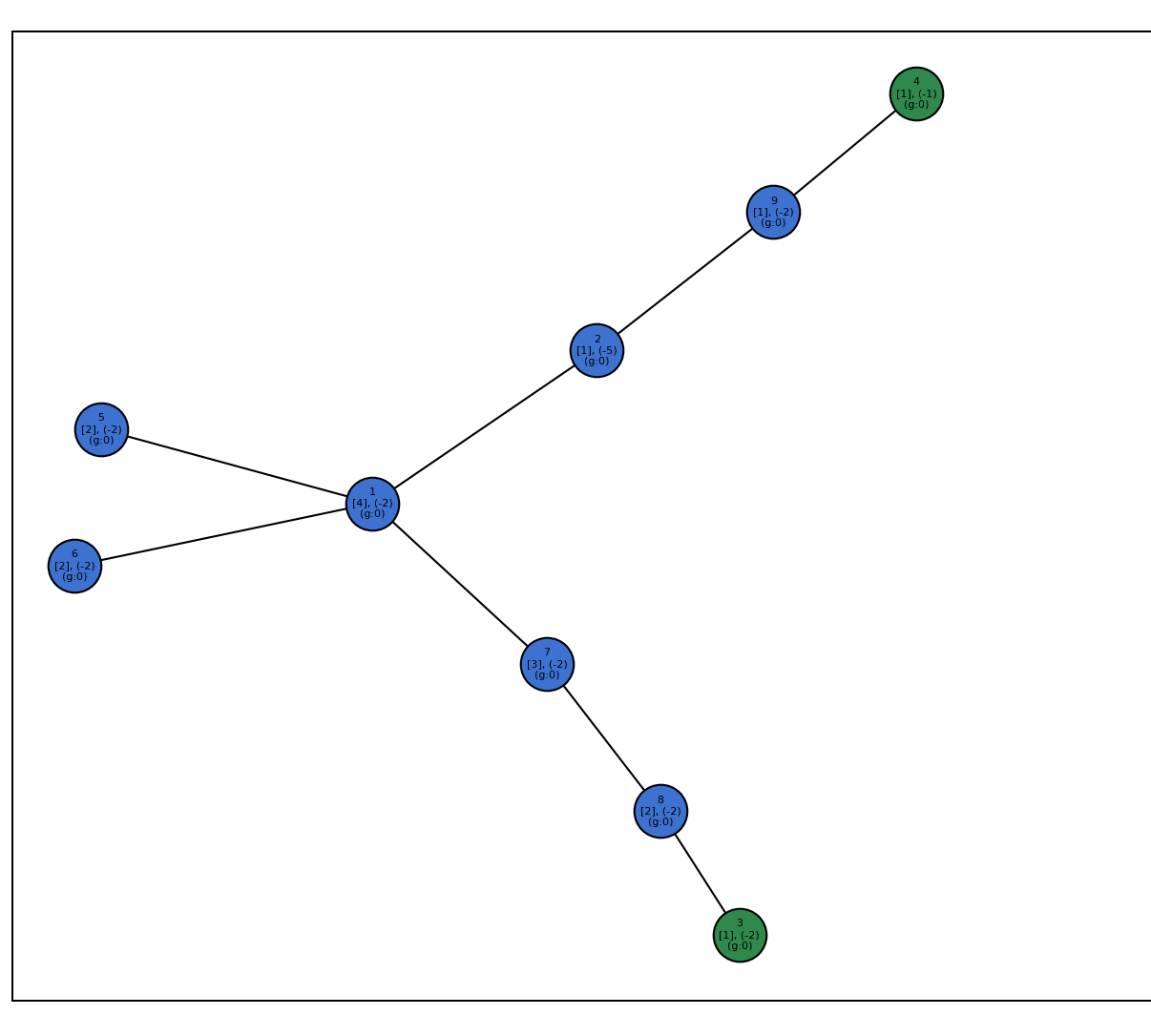}
    \caption{The minimal resolution graph of $\psi_1^6: t^6 = x(x^3-y^2)$.}
    \label{psi1_6_wa}
\end{figure}
\end{example}

\begin{remark}
In resolving the family \(\phi_2^k: y^2=x(x^4+t^k)\), \(k=1,\dots,8\), in \cite{SV}, we first resolved the plane curve singularities \(x(x^4+t^k)\), for all \(k=1,\dots,8\), on the \(xt\)-plane by iterated blow-ups. We then applied Némethi's algorithm, with the exponent of \(y\) fixed to \(2\), to the resulting resolution graphs in order to resolve the singularities \(y^2=x(x^4+t^k)\) in the \(xyt\)-space.

Therefore, to use \verb|PrintGraphwithArrows.m| for this family, one renames the variables and writes the family in the form \(\phi_2^m: t^2=x(x^4+y^m)\), where \(m\) is a positive integer. (The program computes the resolution graphs for all positive integers $m$, not only for the cases \(m=1,\dots,8\) considered in \cite{SV}). To compute the resolution graphs of the members of the \(\phi_2^m\) family, one fixes the program parameter \verb|k| corresponding to the exponent of \(t\) to be \(2\) and varies the exponent of \(y\) in the input section of \verb|PrintGraphwithArrows.m|. For example, to compute the resolution graph of \(\phi_2^8\), one uses the following input:
{
\normalfont
\begin{lstlisting} 
//Input the value for the power of t: 
k := 2;
//Input a polynomial in terms of x and y (and k if needed): 
curve := x*(x^4+y^8); 
\end{lstlisting}}
\begin{figure}
    \centering
    \includegraphics[width=0.75\linewidth]{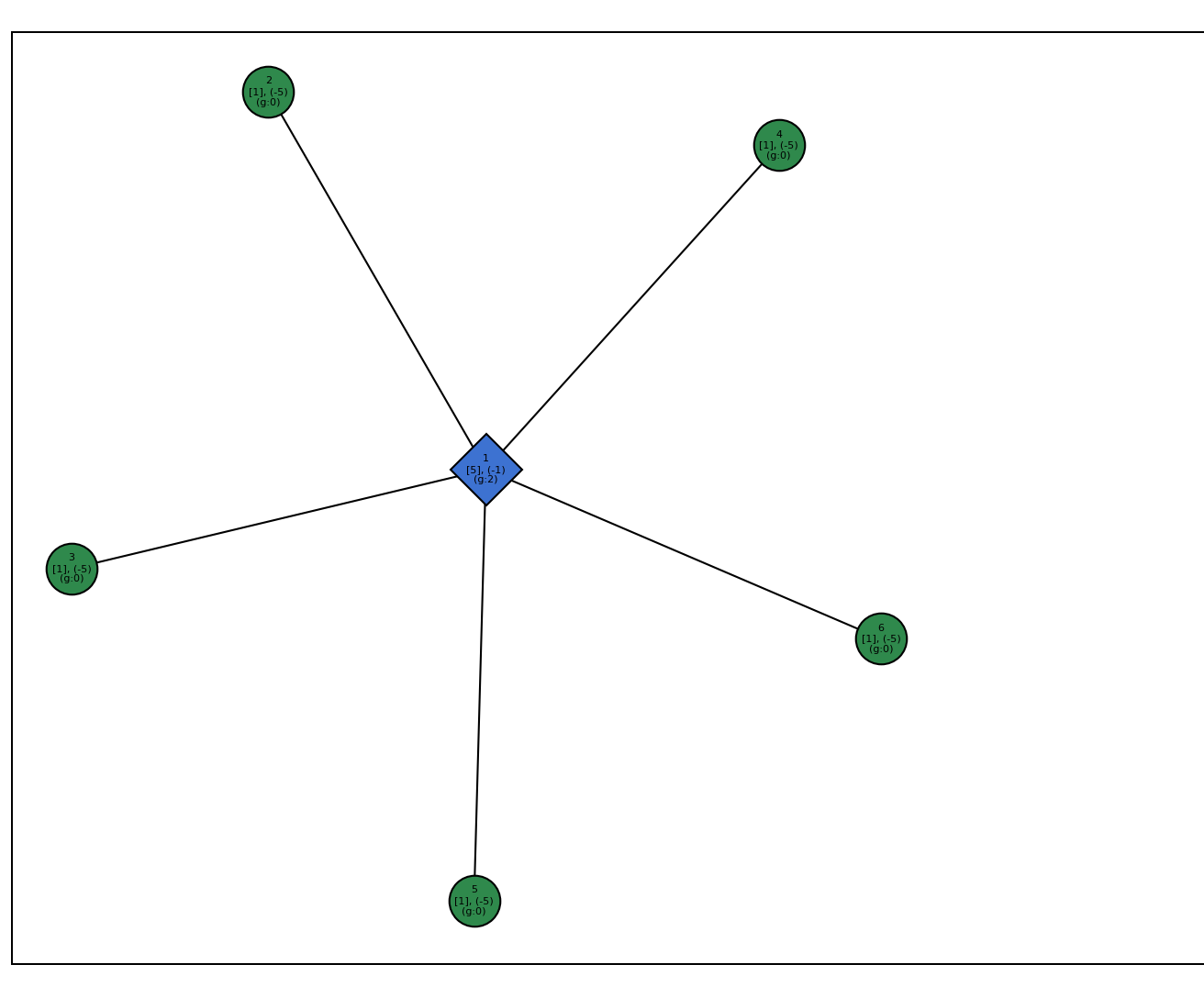}
    \caption{The minimal resolution graph of \(\phi_2^8: t^2=x(x^4+y^8)\).}
    \label{phi28}
\end{figure}
The resulting graph, shown in Figure \ref{phi28}, is minimal and isomorphic to the graph in the second column of the last row of Figure 8 in \cite{SV} after blowing down its $(-1)$ sphere. We compare it with the graph in the second column because the third column shows the resolution graphs after the arrows have been dropped, whereas our computation here retains all the arrows. The same procedure applies for any positive integer \(m\) for the members of the \(\phi_2^m\) family. 

An analogous remark applies to the family \(\phi_4^k\) studied in \cite{SV}.
\end{remark}

\begin{example} {
\normalfont \textbf{(Example 1.20(e) in \cite{Nemethi-lectures})}} We consider the singularity \(x^3+xy^3+t^2\), whose minimal resolution graph, after dropping the arrows, is the graph \(-E_7\) given in \cite[Example 1.20(e), p.9]{Nemethi-lectures}. We compute its resolution graph with arrows using our program \verb|PrintGraphwithArrows.m| and visualize the output with \verb|PlotGraph (Sage)|. The resulting graph is shown in Figure~\ref{Nem1}. After removing the two arrows from this graph, we obtain precisely the graph \(-E_7\).
\begin{figure}
    \includegraphics[width=0.7\linewidth]{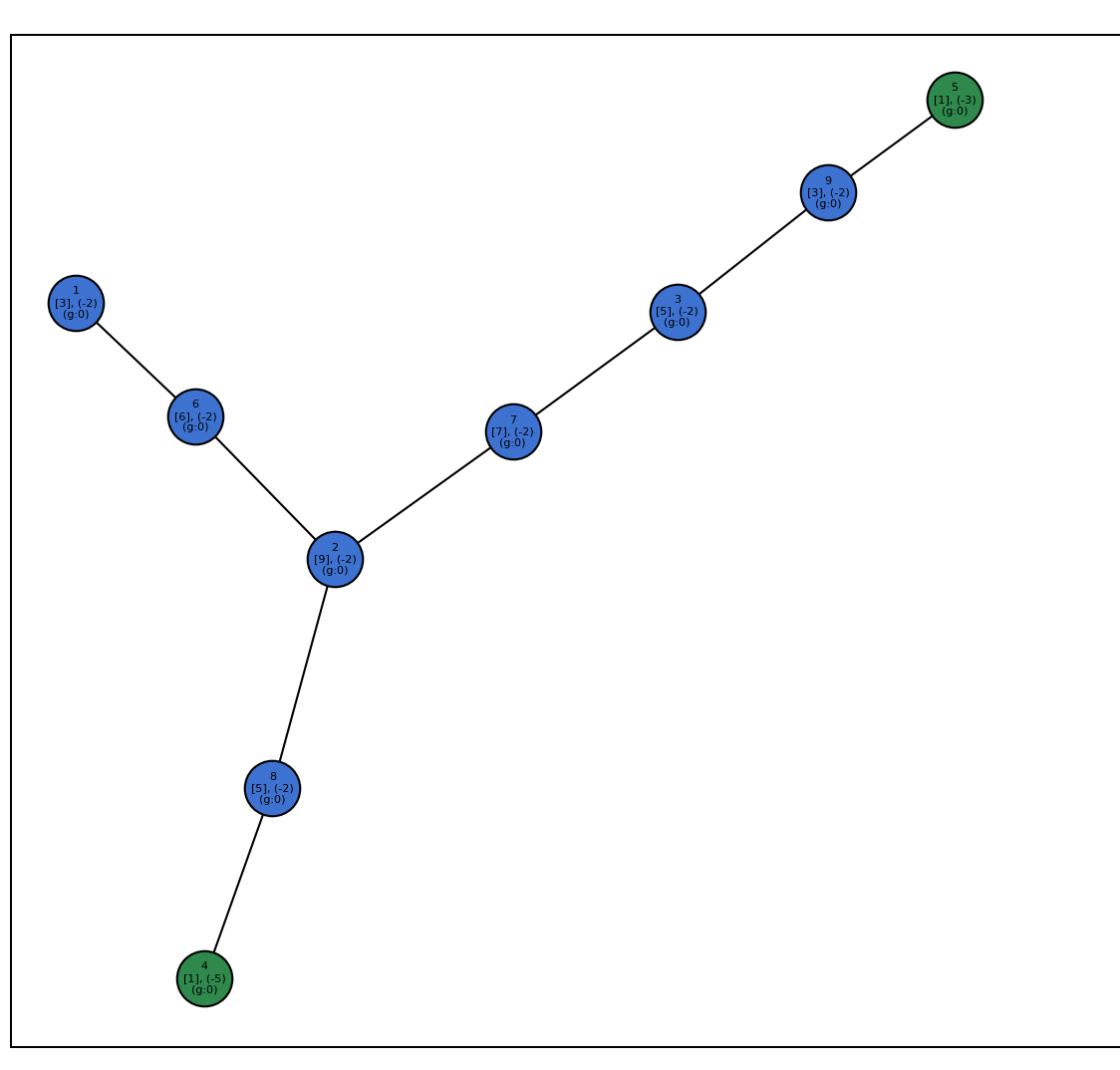}
    \caption{The minimal resolution graph of $x^3+xy^3+t^2=0$. The singularity is taken from Example 1.20(e) in \cite{Nemethi-lectures}.}
    \label{Nem1}
\end{figure}
\end{example}

\begin{example} {
\normalfont \textbf{(Example 1.22(d) in \cite{Nemethi-lectures})}} 
Next, we consider the singularity \((x^2+y^3)(x^3+y^2)+t^2\). Its minimal resolution graph, after dropping the arrows, is the multigraph given in \cite[Example~1.22(d), p.10]{Nemethi-lectures}. This multigraph consists of two vertices, each with self-intersection \((-3)\), joined by two parallel edges. Applying our programs to \((x^2+y^3)(x^3+y^2)+t^2\) produces the graph shown in Figure~\ref{multi}. After dropping the two arrows and blowing down the two \((-1)\)-spheres, we recover the same multigraph.
\begin{figure}
    \centering
    \includegraphics[width=0.7\linewidth]{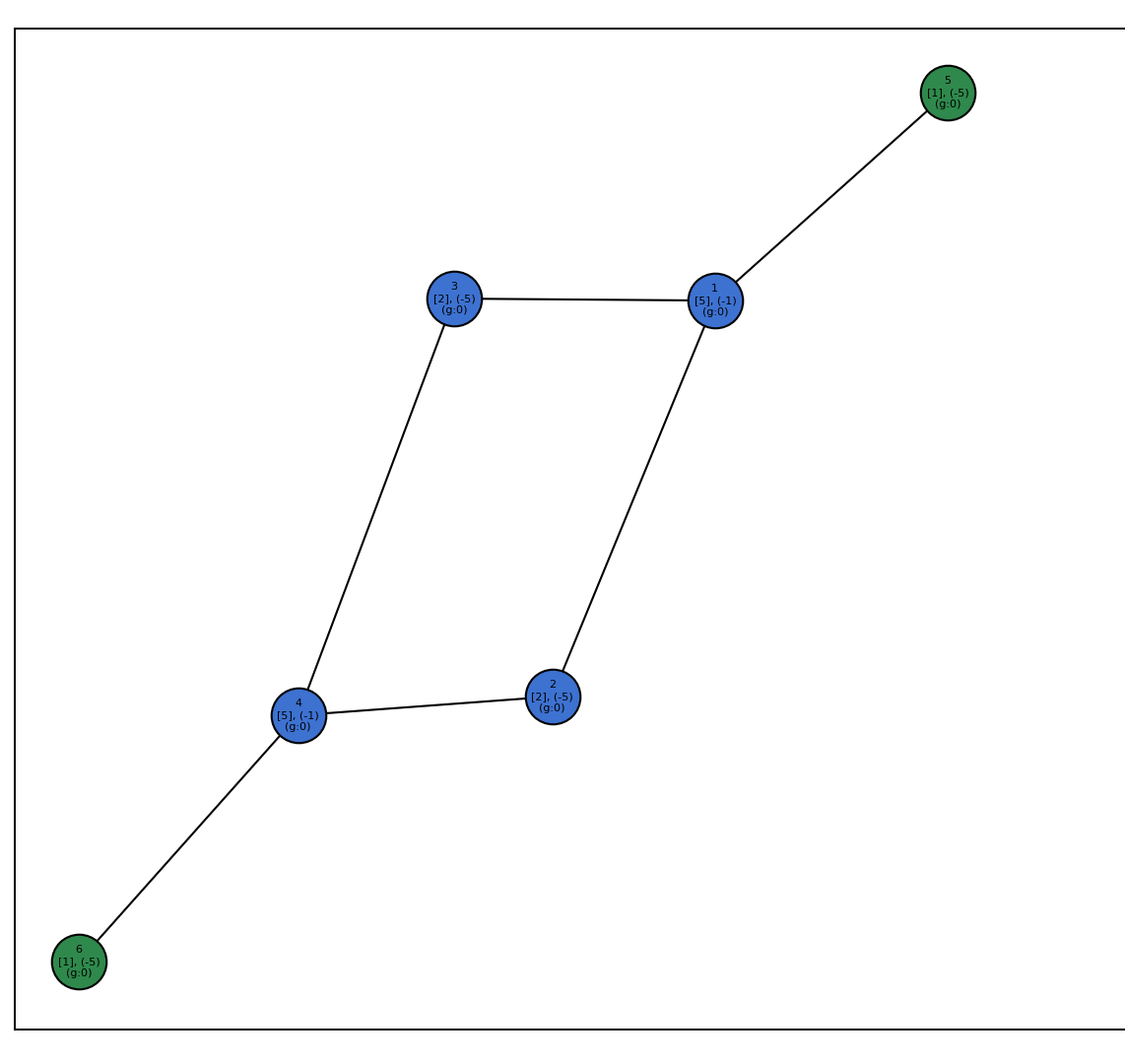}
    \caption{Resolution graph of $(x^2+y^3)(x^3+y^2)+t^2=0$. The singularity is taken from Example 1.22(d) in \cite{Nemethi-lectures}.}
    \label{multi}
\end{figure}
\end{example}

\begin{example}{
\normalfont \textbf{(Exercise 7.2.15(b) in \cite{GS})}} The singularity $t^2 = x^3+xy^5$ is given in Exercise 7.2.15(b) in \cite{GS} and its minimal resolution graph is given in Figure 12.60(b), p.506, in \cite{GS}. The first graph in Figure 12.60(b) shows the resolution graph of the plane curve singularity $B= x^3+xy^5=0$, where $\widetilde B$'s denote the proper transforms of the irreducible components of $B$. The second graph shows the non-minimal resolution graph of the suspension singularity $t^2 = x^3+xy^5$ before the blowdowns, with the arrows (the preimages of the $\widetilde B$ curves) are dropped. For the reader's convenience, we reproduce this second graph in Figure \ref{GSS}, together with the successive blowdown steps leading to the third graph of Figure 12.60(b) in \cite{GS}. This third graph is the minimal resolution graph of $t^2=x^3+xy^5$. 

Now we apply our programs to the singularity $t^2 = x^3+xy^5$. We show the resulting graph in Figure \ref{Our}. We note that when we drop the two arrows in Figure \ref{Our}, we obtain the second graph in Figure \ref{GSS}. Thus, after the blowdown process, we again obtain the same minimal resolution graph.

\begin{figure}
    \centering
    \includegraphics[width=1\linewidth]{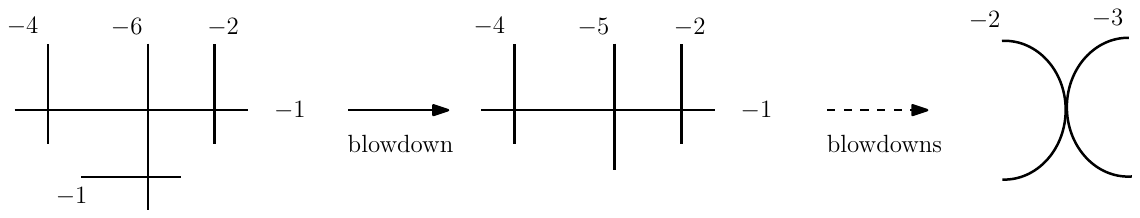}
    \caption{The minimal resolution graph of the suspension singularity $t^2 = x^3+xy^5$ from \cite{GS}, Exercise 7.2.15(b).}
    \label{GSS}
\end{figure}
\begin{figure}
    \centering
    \includegraphics[width=0.7\linewidth]{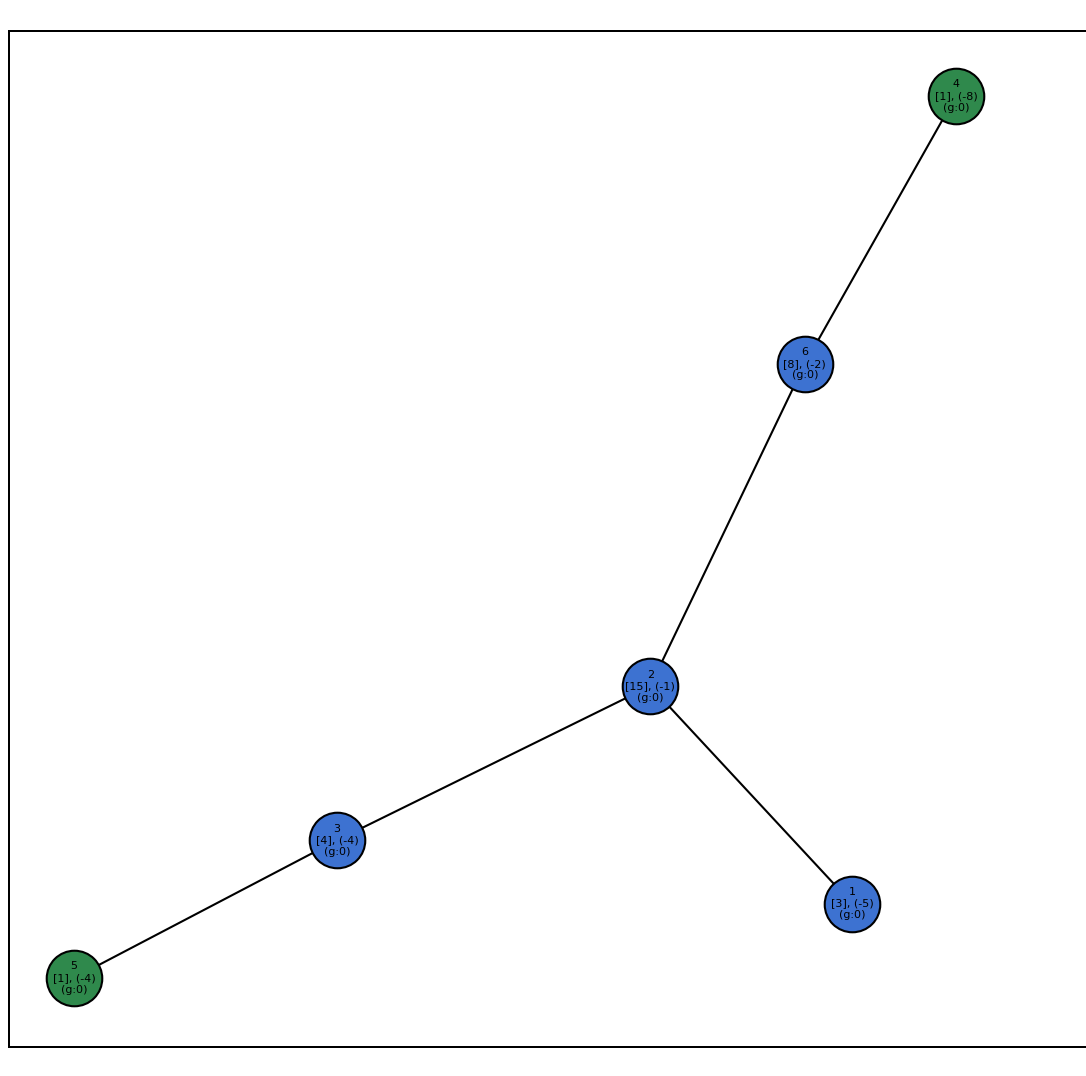}
    \caption{Resolution graph of the suspension singularity $t^2 = x^3+xy^5$. This is the same as the second graph of Figure \ref{GSS}. The singularity is taken from \cite{GS}, Exercise 7.2.15(b).}
    \label{Our}
\end{figure}
\end{example}

\begin{remark}
\label{exx}
The examples above are representative of the computations performed by the programs \verb|PrintGraphwithArrows.m| and \verb|PlotGraph (Sage)|. Beyond these examples, we have applied our programs to all 37 members of the five families of suspension singularities studied in \cite{SV} and \cite{SV2} (listed in \eqref{fam}), obtaining the expected resolution graphs in each case. We have also applied the programs to other parts of Examples 1.20 and 1.22 in \cite{Nemethi-lectures}, as well as to Example 7.2.11 and all parts of Exercises 7.2.12 and 7.2.15 in \cite[Section~7.2]{GS}, recovering the corresponding resolution graphs in each case. Moreover, our programs are applicable to polynomials of arbitrary degrees and are not limited to low-degree cases such as those listed in \eqref{fam}.
\end{remark}

\section{Periodicity and Isomorphisms of Minimal Resolution Graphs}
\label{Theorem}
In this section, we prove \autoref{T} on compactified minimal resolution graphs of suspension singularities. Compactified minimal resolution graphs are the graphs obtained by taking the one-point compactification of the curves denoted by arrows in the graphs. In \cite{SV,SV2}, we showed that compactified minimal resolution graphs of the following suspension singularities: 
\begin{equation} \label{finiteorder}
\begin{aligned}
\phi_1^k &: y^2 = x^5+t^k, \quad k=1,\dots, 9, \\
\phi_3^k &: y^2 = x^6+t^k, \quad k=1,\dots, 5,  \\
\psi_1^k &: t^k = x(x^3-y^2), \quad k=1,\dots, 7
\end{aligned}
\end{equation}
are precisely the fibers with finite-order monodromies in the Namikawa–Ueno classification, and they exhaust all genus two singular fibers with finite-order monodromies. We also showed that compactified minimal resolution graphs of \(\phi_1^{10}\), \(\phi_3^6\), and \(\psi_1^8\) are smooth, closed, genus two surfaces, namely the generic fibers of the corresponding closed, genus two singular fibrations. In addition, the monodromy factorizations associated with the \emph{compactified} minimal resolution graphs of $\phi_1$, $\phi_3$, and $\psi_1$ are
\[
\phi_1=\tau_1\tau_2\tau_3\tau_4,\qquad
\phi_3=\tau_1\tau_2\tau_3\tau_4\tau_5,\qquad
\psi_1=\tau_4\tau_3\tau_2\tau_1\tau_{1},
\]
which are well known to have orders \(10\), \(6\), and \(8\), respectively, in the mapping class group of the \emph{closed} genus two surface. Building on these results, we now prove that for each of the following infinite families
\begin{equation} 
\begin{aligned}
\phi_1^k &: y^2 = x^5+t^k, \quad k \in \mathbb{Z}_{>0}, \\
\phi_3^k &: y^2 = x^6+t^k, \quad k \in \mathbb{Z}_{>0},  \\
\psi_1^k &: t^k = x(x^3-y^2), \quad k \in \mathbb{Z}_{>0},
\end{aligned}
\end{equation}
the corresponding compactified minimal resolution graphs are periodic, with periods equal to the orders of the corresponding monodromy factorizations, namely \(10\), \(6\), and \(8\), respectively:
\begin{theorem}
     \label{T} 
For all positive integers $p,q$, the following statements hold:
\begin{itemize}
    \item if $p \equiv q \pmod{10}$ then the compactified minimal resolution graphs of $\phi_1^p: y^2 = x^5+t^p$ and $\phi_1^q: y^2 = x^5+t^q$ are isomorphic,
     \item if $p \equiv q \pmod{6}$ then the compactified minimal resolution graphs of $\phi_3^p: y^2 = x^6+t^p$ and $\phi_3^q: y^2 = x^6+t^q$ are isomorphic,
      \item if $p \equiv q \pmod{8}$ then the compactified minimal resolution graphs of $\psi_1^p: t^p = x(x^3-y^2)$ and $\psi_1^q: t^q = x(x^3-y^2)$ are isomorphic,
\end{itemize}
where the isomorphisms preserve all graph decorations; that is, the corresponding components of two isomorphic graphs have the same self-intersections, genera, and multiplicities.
\end{theorem}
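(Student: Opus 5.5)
Write $N$ for the proposed period ($N=10,6,8$ for $\phi_1,\phi_3,\psi_1$). It suffices to show that the compactified minimal resolution graphs of $k$ and $k+N$ agree for every $k\ge 1$; periodicity then follows by induction. I would argue geometrically rather than by chasing Némethi's algorithm through all residues. Each suspension singularity $g(x,y)=t^k$ (after renaming variables so that $t$ is the base coordinate, as we did for $\phi_2$) is the total space of a genus two fibration germ $t\colon X_k\to D$. This germ is the pullback by the base change $s\mapsto s^k$ of the $k=1$ fibration $X_1\to D$. The central fiber of $X_1$ has the monodromy $\phi_1$, $\phi_3$ or $\psi_1$, of order $N$ in the mapping class group of the closed genus two surface, and this was established in \cite{SV,SV2}. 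So the monodromy of $X_k$ is the $k$-th power of a fixed element of order $N$, and it depends only on $k \bmod N$.

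\textbf{Step 1.} I will show that the compactified minimal resolution graph of $X_k$ is the dual graph of the relatively minimal normal crossing model of the base-changed fibration, with arrows compactified to the points where the section-like components close up. I will also show that this model is obtained from the minimal good model of $X_1$ by normalized base change of degree $k$ followed by blowdowns. Concretely, take the normalization of $X_1\times_D D$ under $s\mapsto s^k$, resolve it, and contract $(-1)$-curves. This is exactly what Némethi's algorithm computes at the graph level, so the output is the graph of the unique minimal model after base change.

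\textbf{Step 2.} Write $k+N = k\cdot 1 + N$, or better, factor the base change. For $k$ a multiple of $N$, the monodromy $\phi^k$ is trivial. Since the monodromy has finite order, the family becomes smooth after base change of degree $N$, which matches the computations in \cite{SV,SV2} that $\phi_1^{10},\phi_3^6,\psi_1^8$ are smooth genus two surfaces. Hence $X_N$ is birational to a smooth family $Y\to D$. A smooth proper family over a disc pulled back by $s\mapsto s^m$ stays smooth, so $X_{mN}$ has minimal model with smooth central fiber for all $m$. For general $k=k_0+mN$ with $1\le k_0\le N$, the degree $k$ cover $s\mapsto s^{k}$ does not factor through $s^N$. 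Instead I would use the standard fact, due to Winters and to Namikawa--Ueno for genus two, that for finite order monodromy the relatively minimal model of a fibration is determined up to isomorphism of decorated dual graph by the conjugacy class of the monodromy. The key input is that the minimal model equals the quotient of the smooth model $Y_{(k)}$ over the degree-$\mathrm{lcm}$ cover by the cyclic group action, with its singularities minimally resolved. The action of $\mathbb{Z}/k$ on the central fiber factors through the action of the monodromy $\phi^{k}$ of order $N/\gcd(k,N)$. Its fixed point data (rotation numbers), which determine the Hirzebruch--Jung strings, and the quotient genus depend only on the element $\phi^k$, and hence only on $k\bmod N$. Multiplicities, self-intersections and genera are read off from these data, so the decorated compactified graphs coincide.

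The main obstacle is making Step 2 rigorous: I must justify that the quotient presentation applies to $X_k$ for all $k$, and that the local rotation data depend only on $k \bmod N$ rather than on $k$ itself. As a fallback I would argue combinatorially. The plane curve graphs $G$ for $y^2-x^5$, $y^2-x^6$ and $x(x^3-y^2)$ are fixed star-shaped graphs. In Némethi's algorithm every quantity (the number of preimages $\gcd(k,m_v)$, string data via continued fractions of $k/\gcd$ ratios, genera via Riemann--Hurwitz) depends on $k$ only through $\gcd(k,m_v)$ and the residues of $k$ modulo the multiplicities $m_v$. Since $N=\mathrm{lcm}$ of the relevant multiplicities ($10,6$, and $8$ for the nodes of multiplicity $10$, $6$, $8$ respectively), the covering graphs for $k$ and $k+N$ differ only in arrow strings, which lengthen. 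After compactifying the arrows and blowing down, these extra chains collapse, and I would verify this by an explicit computation of the string continued fractions. This is corroborated by \texttt{IsomorphismCheck.m}.
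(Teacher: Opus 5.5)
Your main, geometric argument is correct in substance and takes a genuinely different route from the paper. The paper's proof never uses the monodromy. It runs N\'emethi's algorithm symbolically on a general member of a residue class (e.g.\ $n=6p+2$ for $[\phi_3^2]$): it solves the string congruences, expands the Hirzebruch--Jung fractions with case splits ($p=1$, $p=2$, $p$ odd, $p$ even), and computes multiplicities and missing self-intersections from N\'emethi's formulas. It then blows down, compactifies the arrows, blows down again, and compares with the base case; only three residue classes are written out.

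You replace this with two facts. First, the closed fibration $X_k$ is the pullback of $X_1$ under $t\mapsto t^k$, so its monodromy is $\phi^k=\phi^{k+N}$. Second, in genus $\ge 2$ a relatively minimal degeneration with periodic monodromy is determined, up to fiber-preserving homeomorphism and hence up to decorated graph, by the conjugacy class of its monodromy. What this buys: it explains why the period equals the monodromy order, treats all residue classes uniformly (indeed any suspension whose $k=1$ closed fibration has finite-order monodromy), and makes $k\equiv 0$ immediate. The price: you depend on \cite{SV,SV2} for the closed fibrations and their monodromies and on a deep external theorem, and you learn nothing about the graphs with arrows, which are not periodic.

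Three corrections to the main argument:
\begin{enumerate}
\item The cleanest reference for the fact you invoke is the Matsumoto--Montesinos theorem.
\item The paper's compactified graph is the fiber of the relatively minimal model, not the minimal normal crossing model; for $\phi_1^1$ it is the $(2,5)$-cusp. This is harmless, since each model determines the other, but Step 1 should say so.
\item If you prove Step 2 via the quotient rather than cite it, the group is $\mathbb{Z}/N'$ with $N'=N/\gcd(k,N)$, not $\mathbb{Z}/k$. Its generator acts on the smooth central fiber $Y_0$ by $h^k$, where $h$ is the order-$N$ automorphism realizing the monodromy, and on the base by $e^{2\pi i/N'}$. Both rotations enter the Hirzebruch--Jung data, and both depend only on $k \bmod N$. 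This removes your ``main obstacle''.
\end{enumerate}

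The combinatorial fallback, by contrast, is wrong as stated and should be dropped.
\begin{enumerate}
\item The number of vertices over $w$ is $\gcd(k,m_w,\text{neighbors' multiplicities})$, not $\gcd(k,m_w)$.
\item The multiplicity over $w$ is $m_w/\gcd(k,m_w)$, which is not $N$-periodic. For the vertex $w_2$ of multiplicity $4$ in the $\phi_3$ computation it equals $2$ for $n=2$ but $1$ for $n=8$; this parity dependence is reflected in the paper's case split.
\item $N$ is not the lcm of the multiplicities: $\mathrm{lcm}(6,4,2)=12$ and $\mathrm{lcm}(10,5,4,2)=20$.
\item The inner strings lengthen with $k$, not only the arrow strings; see the $G(3,2,3p+1)$ and $G(2,1,3p+1)$ strings for $\phi_3^{6p+2}$. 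For $\phi_1^{11}$ the arms even contain $(-1)$-spheres.
\end{enumerate}
That all of these differences disappear after compactifying and blowing down is precisely what the paper's case-by-case computation establishes, not a routine check.
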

The converses of the statements in Theorem \ref{T} also hold except that, after compactifying the arrows, the pair
\[
(\phi_3^2,\ \phi_3^4)
\]
has isomorphic minimal resolution graphs (see Figure 6 in \cite{SV}). Consequently, the same holds whenever
\[
p \equiv 2 \pmod{6}
\quad\text{and}\quad
q \equiv 4 \pmod{6}.
\]
Likewise, each of the pairs
\[
(\psi_1^1,\psi_1^3),\qquad
(\psi_1^2,\psi_1^6),\qquad
(\psi_1^5,\psi_1^7)
\]
has isomorphic minimal resolution graphs after compactifying the arrows (see Figures 1, 2, 3, 4 in \cite{SV2}). Consequently, the corresponding congruence classes modulo \(8\) also yield isomorphic minimal resolution graphs.

We note that for the \(\phi_2\) and \(\phi_4\) families considered in \cite{SV}, we do not have analogous periodicity results, since applying Némethi's algorithm to these families produces only subsets of the corresponding Namikawa--Ueno fibers rather than the complete fibers. The complete fibers associated with the \(\phi_2\) and \(\phi_4\) families were obtained in \cite{SV2}; these are precisely the fibers arising from the \(\psi_1\) family.

\begin{example}
\label{psi_78}
To illustrate the periodicity of the minimal resolution graphs stated in Theorem \ref{T}, we compute the minimal resolution graph of $\psi_1^{78}$ using
\verb|PrintGraphwithArrows.m| and \verb|PlotGraph (Sage)|. The resulting graph is shown in Figure \ref{psi78}. By compactifying the two arrows in Figure \ref{psi78} and blowing them down, we obtain a graph isomorphic to the minimal resolution graph of $\psi_1^6$ given in Example \ref{ex_psi}, after compactifying its arrows and blowing down the resulting $(-1)$-vertex.
\begin{figure}[!ht]
    \centering
\includegraphics[width=0.8\linewidth]{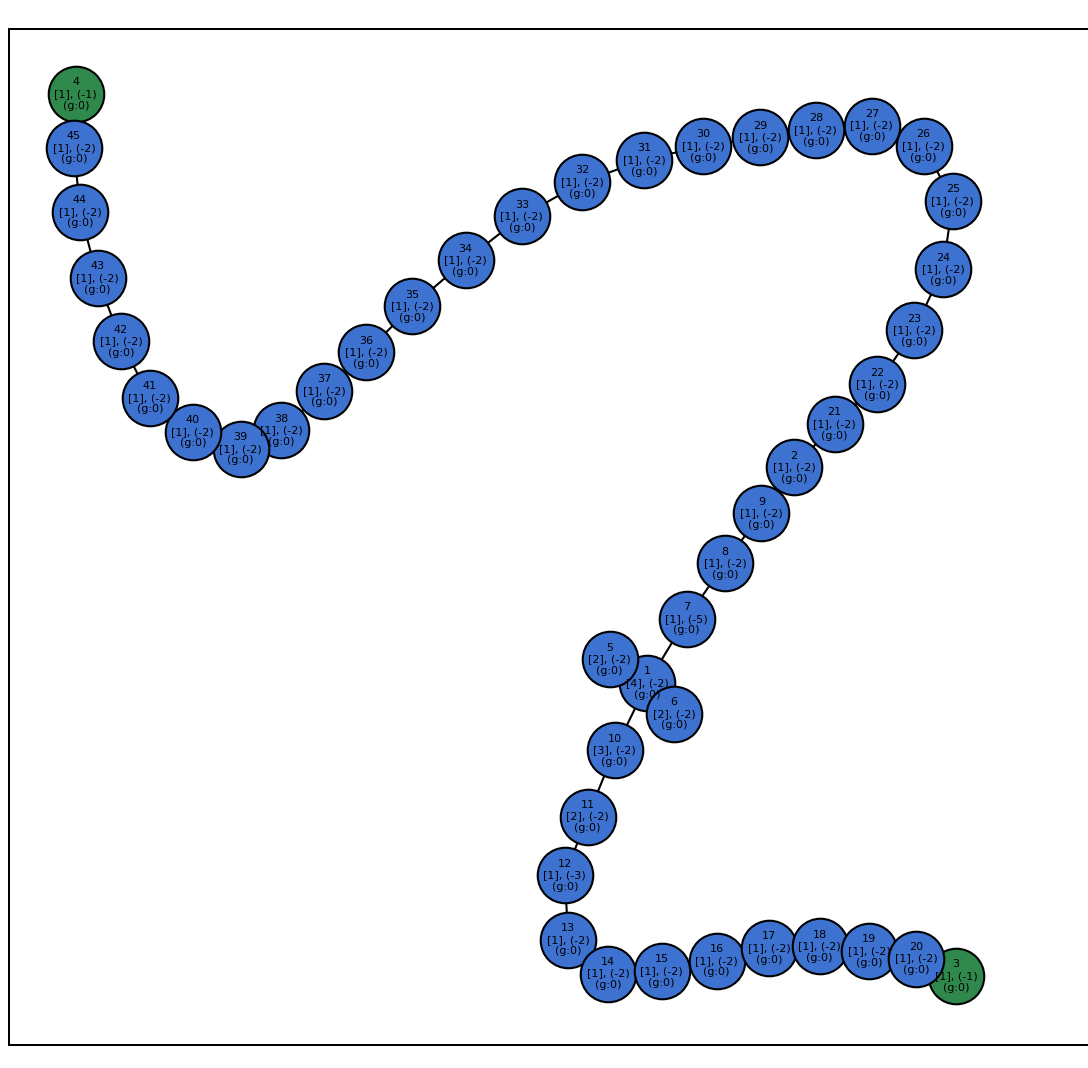}
    \caption{The minimal resolution graph of $\psi_1^{78}$.}
    \label{psi78}
\end{figure}

For larger exponents, the graphs produced by \verb|PlotGraph (Sage)| are not always clearly displayed in the online SageMath environment, which may render them as if they contained loops, even though the underlying graphs have no loops. One needs to run SageMath cell several times to see the correct display. If the graph is too large or too small, one can modify line 31, parameter \verb|figsize=(10, 10)| to make a bigger or smaller display. This allows for the size of the graph to be accommodated.
\end{example}
\begin{proof}
Theorem \ref{T} is proved by a direct application of Némethi's algorithm. We give the proofs for three representative congruence classes, $[\phi_3^2], [\phi_1^1]$, and $[\phi_1^{10}]$.

We start with the congruence class $[\phi_3^2: y^2 = x^6+t^2]$ modulo 6. Using the programs \verb|PrintGraphwithArrows.m| and \verb|PlotGraph (Sage)|, we determine the minimal resolution graph of the singularity $\phi_3^2$, depicted on the left of Figure \ref{phi32}. When we compactify the arrows we obtain the graph on the right.
\begin{figure}
    \centering
    \includegraphics[width=0.7\linewidth]{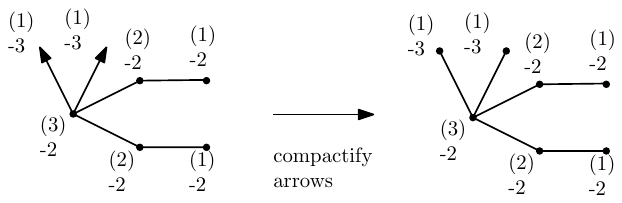}
    \caption{The minimal resolution graph of $\phi_3^2$.}
    \label{phi32}
\end{figure}

Then, we manually compute the resolution graphs of $\phi_3^n$ for all $n \equiv 2$ (mod 6), and $n>2$, and show that the resulting compactified graphs are the same as the graph in Figure \ref{phi32}. We first resolve the plane curve singularity $-x^6+y^2$ by successive blow-ups. We obtain the graph shown in Figure~\ref{62}, where we denote its vertices by $w_1$, $w_2$, and $w_3$. The multiplicities of the components are given in parentheses.
\begin{figure}
    \centering
    \includegraphics[width=0.26\linewidth]{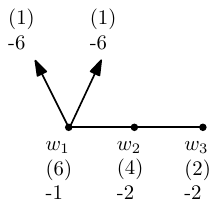}
    \caption{Resolution graph of the plane curve singularity $-x^6+y^2=0$.}
    \label{62}
\end{figure}
Then, we apply Némethi's algorithm to the graph in Figure \ref{62} taking the special degree-$n$ cover prescribed by the algorithm. That is to say, we determine the preimages of all vertices, edges, and arrows of the graph in Figure \ref{62} under this cover. 

From part (a) of the algorithm on p.14 of \cite{Nemethi-lectures}, we compute that there is one vertex lying over the vertex $w_1$ and two vertices lying over each of the vertices $w_2$ and $w_3$. We then find that all the resulting vertices have genus $0$. We note that in these computations for each vertex $w_i$, we use the multiplicities of its neighbors.

From part (c) of the algorithm, we have that above each arrow there is one string of type $G(6,1,n)$. Using the formula for strings on p.13 of \cite{Nemethi-lectures}, we determine this string. Namely, we solve
\[1+x\frac{6}{\gcd(6,n)} \equiv 0 \pmod{\frac{n}{\gcd(6,n)}}.\]
Writing $n=6p+2$, where $p\geq 1$, this becomes
\[1+3x \equiv 0 \pmod{3p+1}.\]
The least nonnegative solution is $x=p$. Then we compute the Hirzebruch--Jung continued fraction expansion of $\displaystyle\frac{3p+1}{p}$ which is 
\begin{equation} 
\label{arrows}
[-4, \underbrace{-2,\dots,-2}_{(p-1)\ }].
\end{equation}
Hence, above each arrow we obtain a string starting at the preimage of $w_1$ and ending with an arrowhead, and the internal vertices have self intersections $[-4, -2, \dots, -2]$. From formula $(**)$ on p.13 of \cite{Nemethi-lectures} we find the multiplicity $m_1$ of the first vertex in the string that has self intersection $(-4)$. Namely, we have $1+3p = m_1 (3p+1)$ and hence $m_1= 1$. Then, using the first two equations on p.14 of \cite{Nemethi-lectures}, which are obtained from formula $(*)$ on p.13, we find that all multiplicities of the internal vertices of the string are equal to $1$.  

Now we find the preimage of the edge between the vertices $w_1$ and $w_2$ of the graph in Figure \ref{62}. From part (b) of the algorithm on p.14 we have that there are $\gcd(6,4,n)=2$ strings of type $G(3,2,n/2) = G(3,2,3p+1)$. To determine these strings, we solve 
\[2+x\frac{3}{\gcd(3,3p+1)} \equiv 0 \pmod{\frac{3p+1}{\gcd(3,3p+1)}}.\]
The least nonnegative solution is $x=2p$. Hence we take the Hirzebruch--Jung continued fraction expansion of $\displaystyle\frac{3p+1}{2p}$ which is 
\begin{equation}
\label{12}
\begin{cases}
  [-2], & \text{if } p=1 \\
  [-2,-3,\underbrace{-2,\dots,-2}_{(p-3)/2\ }], & \text{if } p \geq 3\;\; \text{and odd}\\
  [-2, -4], & \text{if } p=2 \\
  [-2,-3,\underbrace{-2,\dots,-2,}_{(p-4)/2\  } -3], & \text{if } p \geq 4\;\; \text{and even.}\\
\end{cases}
\end{equation}
As in \eqref{arrows}, the sequences in \eqref{12} give the self-intersections of the internal vertices of the corresponding strings in each case. For each $p$, there are two identical strings, both starting at the preimage of the vertex $w_1$, with one ending at the preimage $w_{2,1}$ of the vertex $w_2$ and the other ending at the other preimage $w_{2,2}$ of $w_2$.

In the same way, we find the preimage of the edge between the vertices $w_2$ and $w_3$ of the graph in Figure \ref{62}. There are $\gcd(4,2,n)=2$ strings of type $G(2,1,3p+1)$. We solve
\[1+x\frac{2}{\gcd(2,3p+1)} \equiv 0 \pmod{\frac{3p+1}{\gcd(2,3p+1)}}.\]
The least nonnegative solution is 
\begin{equation*}
x= \begin{cases}
  \displaystyle\frac{3p-1}{2} \pmod{\frac{3p+1}{2}}, & \text{if $p$ is odd} \\
  \displaystyle\frac{3p}{2} \pmod{3p+1}, & \text{if $p$ is even} 
\end{cases}
\end{equation*}
Hence we take the following Hirzebruch--Jung continued fraction expansions:
\begin{equation}
\label{23}
\begin{cases}
  \displaystyle\frac{3p+1}{3p-1} = [\underbrace{-2,\dots,-2}_{(3p-1)/2\ }], & \text{if $p$ is odd } \\
\displaystyle\frac{2(3p+1)}{3p} = [-3, \underbrace{-2,\dots,-2}_{(3p/2)-1\ }], & \text{if $p$ is even. } 
\end{cases}
\end{equation}
The sequences in \eqref{23} similarly give the self-intersections of the internal vertices of the corresponding strings. For each $p$, there are two identical strings. One starts at the preimage $w_{2,1}$ of the vertex $w_2$ and ends at the preimage $w_{3,1}$ of the vertex $w_3$. The other starts at the preimage $w_{2,2}$ of the vertex $w_2$ and ends at the preimage $w_{3,2}$ of the vertex $w_3$.

As we did for the string corresponding to \eqref{arrows}, we also compute the multiplicities of all vertices in the strings corresponding to \eqref{12} and \eqref{23}.

Then, for each of the four cases $p=1$, $p=2$, $p \geq 3$ odd, and $p \geq 4$ even, we combine the strings corresponding to that case from \eqref{arrows}, \eqref{12}, and \eqref{23}. Next, using formula $(*)$ on p.13, we compute the missing self intersections, namely those of the preimages of the components of the graph in Figure \ref{62}. Hence we obtain the resolution graphs of $\phi_3^n$ for all $n=6p+2$ with $p \geq 1$ as shown in Figure \ref{cases}. The red vertices in Figure \ref{cases} represent the preimages of the vertices $w_i$ of the graph in Figure \ref{62}, and the black vertices represent the internal vertices of the strings.
\begin{figure}
    \centering
    \includegraphics[width=1\linewidth]{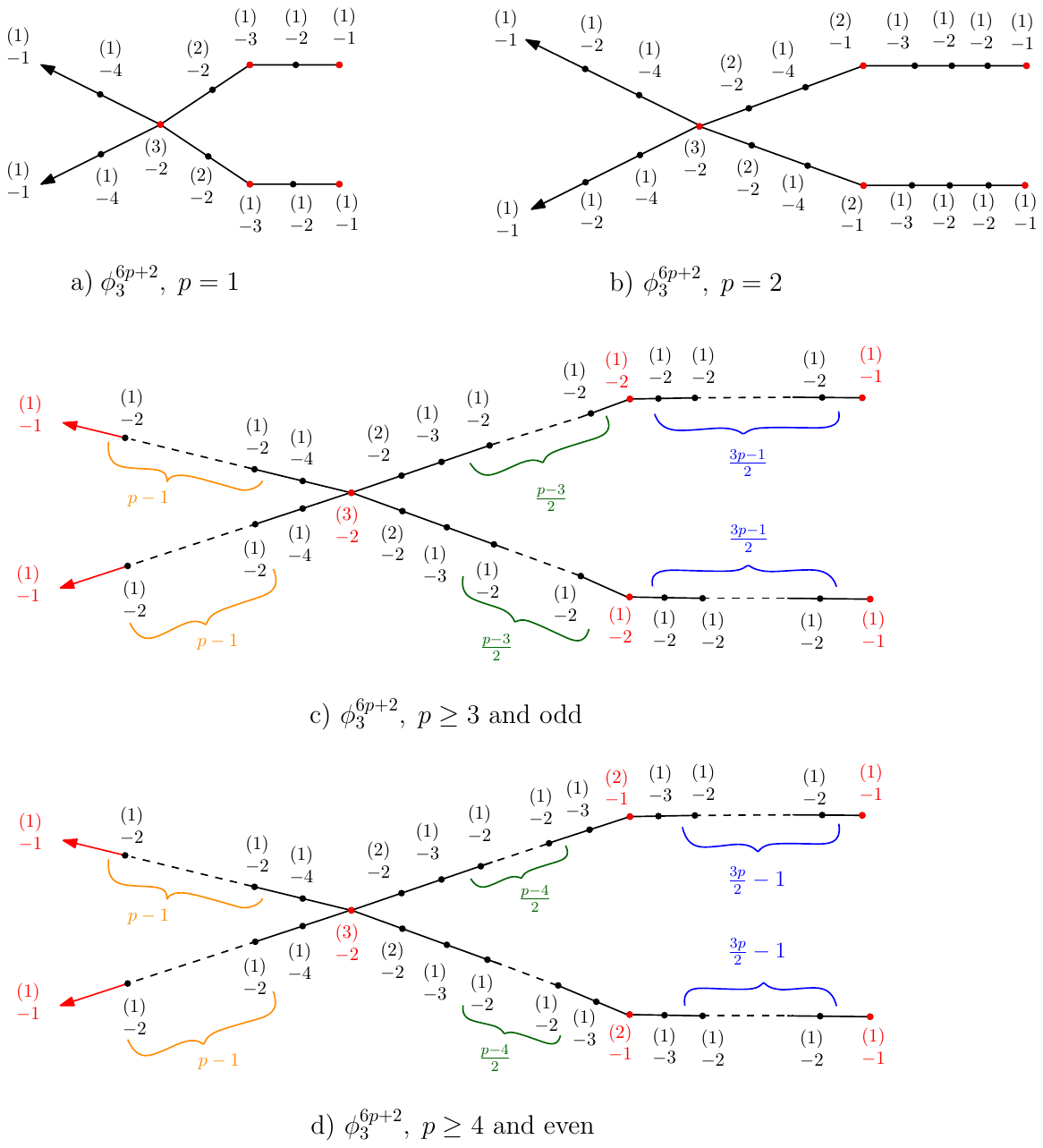}
    \caption{Resolution graphs of $\phi_3^n$ for all $n=6p+2$ with $p \geq 1$ before blowdowns.}
    \label{cases}
\end{figure}

After blowing down all $(-1)$ vertices in all the graphs shown in Figure \ref{cases}, we obtain minimal resolution graphs, all of which have the form shown on the left of Figure \ref{final}. This is the minimal resolution graph of $\phi_3^n$ for all $n\equiv 2\pmod{6}$, $n>2$. For any particular value of $n$ in this congruence class, this graph can be verified  using \verb|PrintGraphwithArrows.m| and \verb|PlotGraph (Sage)| without performing any of the above computations. We then compactify the arrows and perform a sequence of blowdowns to obtain the graph shown on the right of Figure \ref{final}. This is the same as the compactified graph of $\phi_3^2$ shown on the right of Figure \ref{phi32}. This completes the proof for the congruence class $[\phi_3^2]$ modulo 6.
\begin{figure}
    \centering
    \includegraphics[width=1\linewidth]{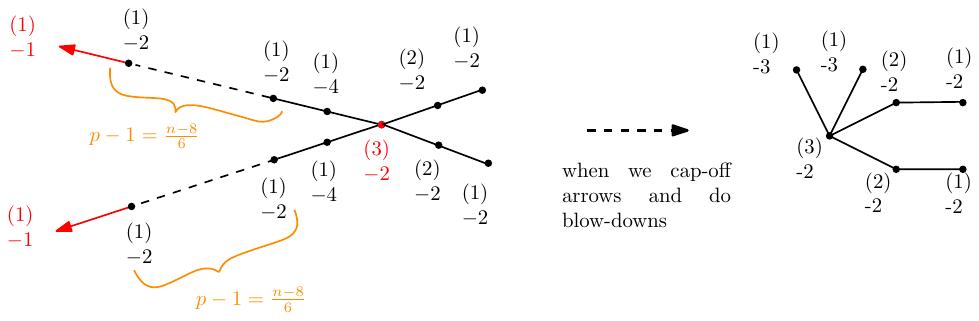}
    \caption{The minimal resolution graph of $\phi_3^n$ for all $n=6p+2$ with $p \geq 1$.}
    \label{final}
\end{figure}

In the proofs of the remaining congruence classes, we follow the same method. For the reader's convenience, we summarize the proofs for the congruence classes $[\phi_1^1]$ and $[\phi_1^{10}]$ modulo 10.

For $[\phi_1^1]$, we first compute the resolution graph of $\phi_1^1: y^2=x^5+t^1$ as shown in the first graph of Figure \ref{phi1_1}. This first graph can be verified by \verb|PrintGraphwithArrows.m| and \verb|PlotGraph (Sage)|. Then we manually blowdown the $(-1)$ spheres as shown in the figure. When we compactify the arrow at the end, we obtain the $(2,5)$-cusp.
\begin{figure}[!ht]
    \centering
    \includegraphics[width=1\linewidth]{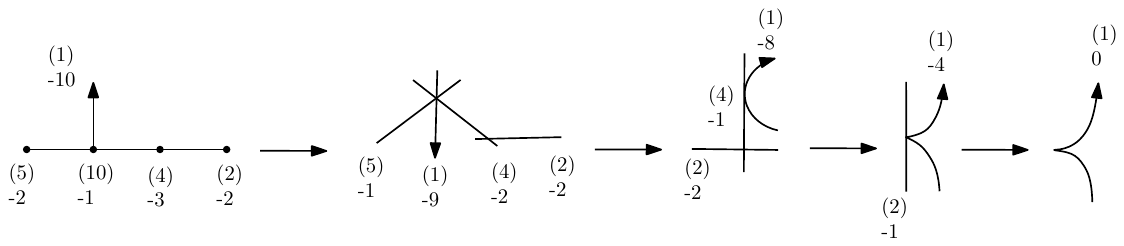}
    \caption{The minimal resolution graph of $\phi_1^{1}$.}
    \label{phi1_1}
\end{figure}

Next, we manually compute the resolution graph of $\phi_1^{11}:y^2=x^5+t^{11}$, shown in the first graph of Figure~\ref{phi1_11}. We then successively blow down the $(-1)$-spheres on the two arms of the graph, obtaining the second graph in Figure~\ref{phi1_11}. The second graph can be verified using \verb|PrintGraphwithArrows.m| and \verb|PlotGraph (Sage)| without computing the first graph. Finally, after compactifying the arrow and performing the remaining blowdowns, we again obtain the $(2,5)$-cusp, as shown in the subsequent graphs of Figure~\ref{phi1_11}.
\begin{figure}
    \centering
    \includegraphics[width=1\linewidth]{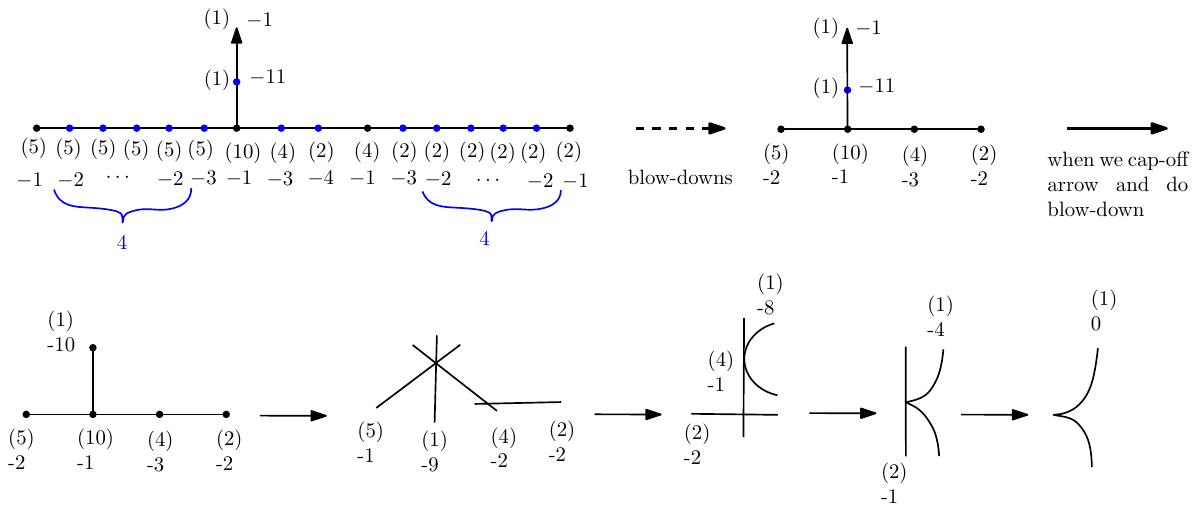}
    \caption{The minimal resolution graph of $\phi_1^{11}$.}
    \label{phi1_11}
\end{figure}

Then, we manually compute the resolution graph of $\phi_1^{10p+1}$, where $p\ge2$. The resulting graph is the first graph in Figure~\ref{phi1_10p+1}. We then successively blow down the $(-1)$-spheres and obtain the second graph in Figure~\ref{phi1_10p+1}. Finally, after compactifying the arrow and performing the remaining blowdowns, we again obtain the $(2,5)$-cusp. Hence, in all cases we obtain the $(2,5)$-cusp. This completes the proof for the congruence class $[\phi_1^1]$ modulo $10$.
\begin{figure}
    \centering
    \includegraphics[width=1\linewidth]{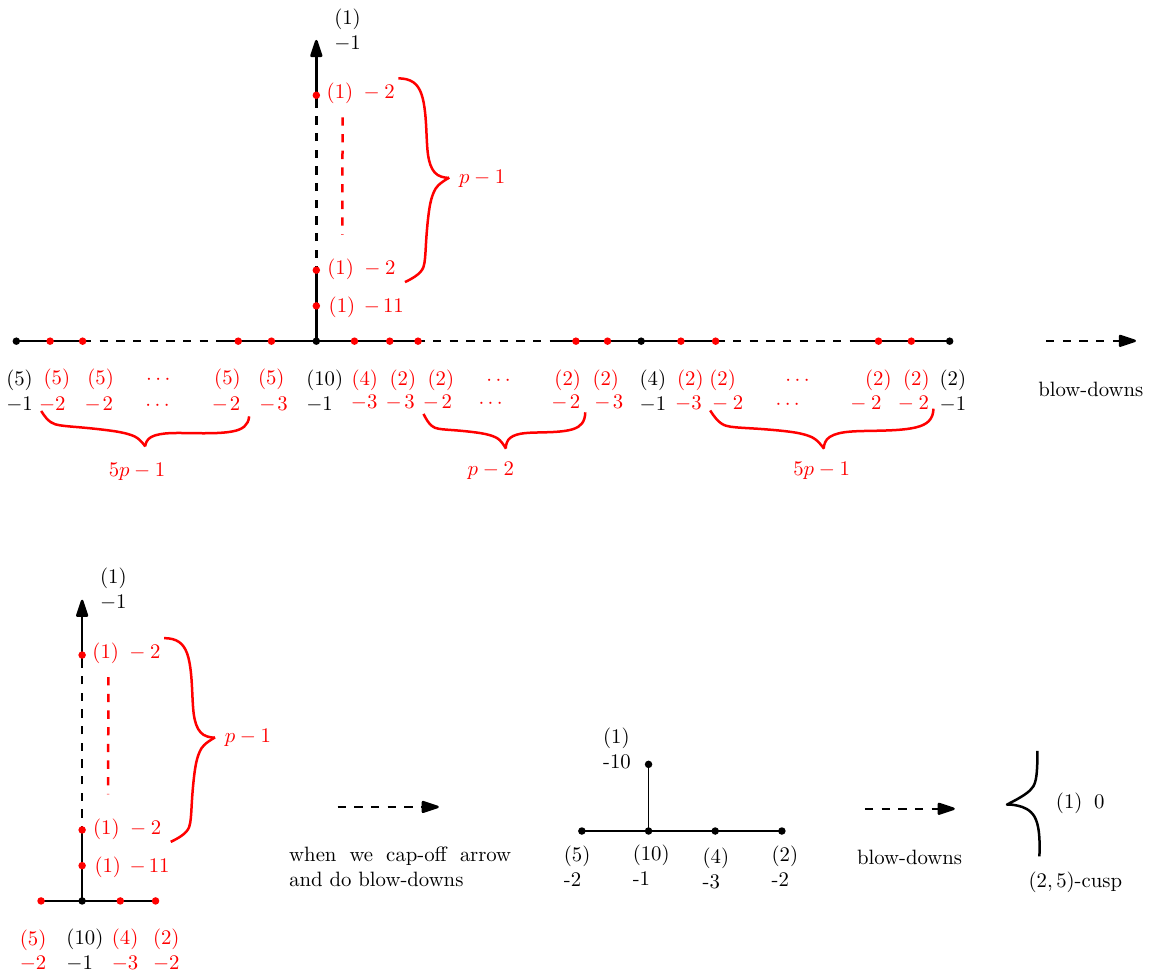}
    \caption{The minimal resolution graph of $\phi_1^{10p+1}$, $p \geq2$.}
    \label{phi1_10p+1}
\end{figure}

Now we give the proof for the congruence class $[\phi_1^{10}]$ modulo $10$. We first resolve $\phi_1^{10}$ as shown in Figure \ref{phi1_10}. After compactifying the arrow and blowing down all $(-1)$ spheres, we obtain the genus two surface of self intersection 0. 
\begin{figure}
    \centering
    \includegraphics[width=1\linewidth]{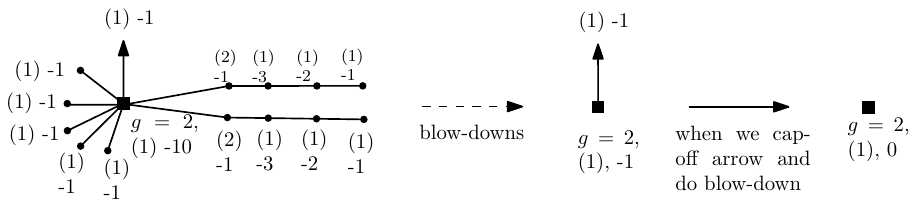}
    \caption{The minimal resolution graph of $\phi_1^{10}$.}
    \label{phi1_10}
\end{figure}

Next, we work with $\phi_1^{10p}$ with $p>1$. The resolution graph of $\phi_1^{10p}$ for odd $p>1$ is shown in part (a) of Figure~\ref{phi1_10p}, and the resolution graph for even $p$ is shown in part (b). In all of these graphs, the central vertices shown as squares represent genus two surfaces. After performing the blowdowns, all graphs simplify to the same graph, namely the third graph in Figure~\ref{phi1_10p}. Finally, after compactifying the arrow and performing the remaining blowdowns, we again obtain a genus two surface of self-intersection $0$, as in the case of $\phi_1^{10}$. This completes the proof for $[\phi_1^{10}]$ modulo $10$. 
The proofs for the remaining congruence classes are entirely analogous and are therefore omitted.
\begin{figure}
    \centering
    \includegraphics[width=1\linewidth]{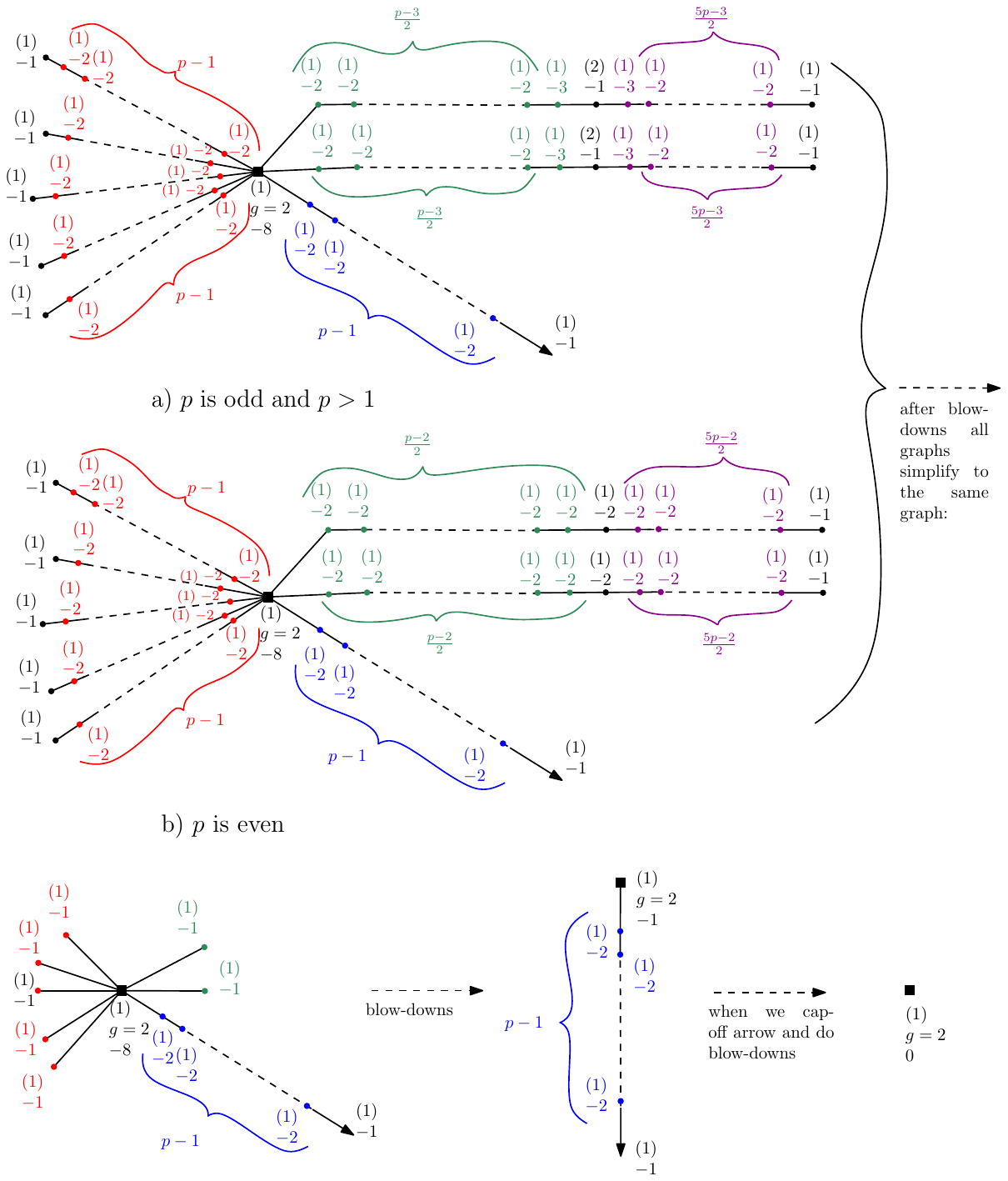}
    \caption{The minimal resolution graph of $\phi_1^{10p}$, $p>1$.}
    \label{phi1_10p}
\end{figure} 
\end{proof}

\section{Magma program for closed fibers}
\label{Magma2}
The Magma program \verb|PrintGraphwithArrows.m| distinguishes the arrows from the sphere components in the resulting graphs, and hence the arrows are not blown down. Throughout the remainder of the paper we work with closed fibers, obtained by compactifying the complex curves represented by the arrows in the resolution graphs. For this purpose, we provide a Magma program \verb|PrintGraphCompactified.m| in which the blowdown procedure treats the arrows as sphere components. Thus, they are blown down whenever their self-intersection is \((-1)\), just like any other \((-1)\)-sphere. 

The program \verb|PrintGraphCompactified.m| differs from \verb|PrintGraphwithArrows.m| only in this modification. Both programs are self-contained, containing all the necessary functions, and may therefore be used independently. We also note that Remark~\ref{bdown} applies equally to
\verb|PrintGraphCompactified.m|.

To produce the compactified minimal resolution graphs of suspension singularities, one downloads the Magma program \verb|PrintGraphCompactified.m| from \cite{SSG} and follows the same procedure described in \autoref{Magma1}, replacing the program \verb|PrintGraphwithArrows.m| with \verb|PrintGraphCompactified.m| before using \verb|PlotGraph (Sage)|.

\begin{example}
\label{ex_psi2}
We obtain the minimal resolution graph of $\psi_1^6:t^6 = x(x^3-y^2)$ by using \verb|PrintGraphCompactified.m| and then \verb|PlotGraph (Sage)|. We show the resulting graph in Figure \ref{psi1_6_c}. We note that this graph is isomorphic to the graph given in the third row and second column of Figure 4 in \cite{SV2}.
(Cf. Example \ref{ex_psi} above, where the arrows are retained.)
\begin{figure}
 \includegraphics[width=4in]{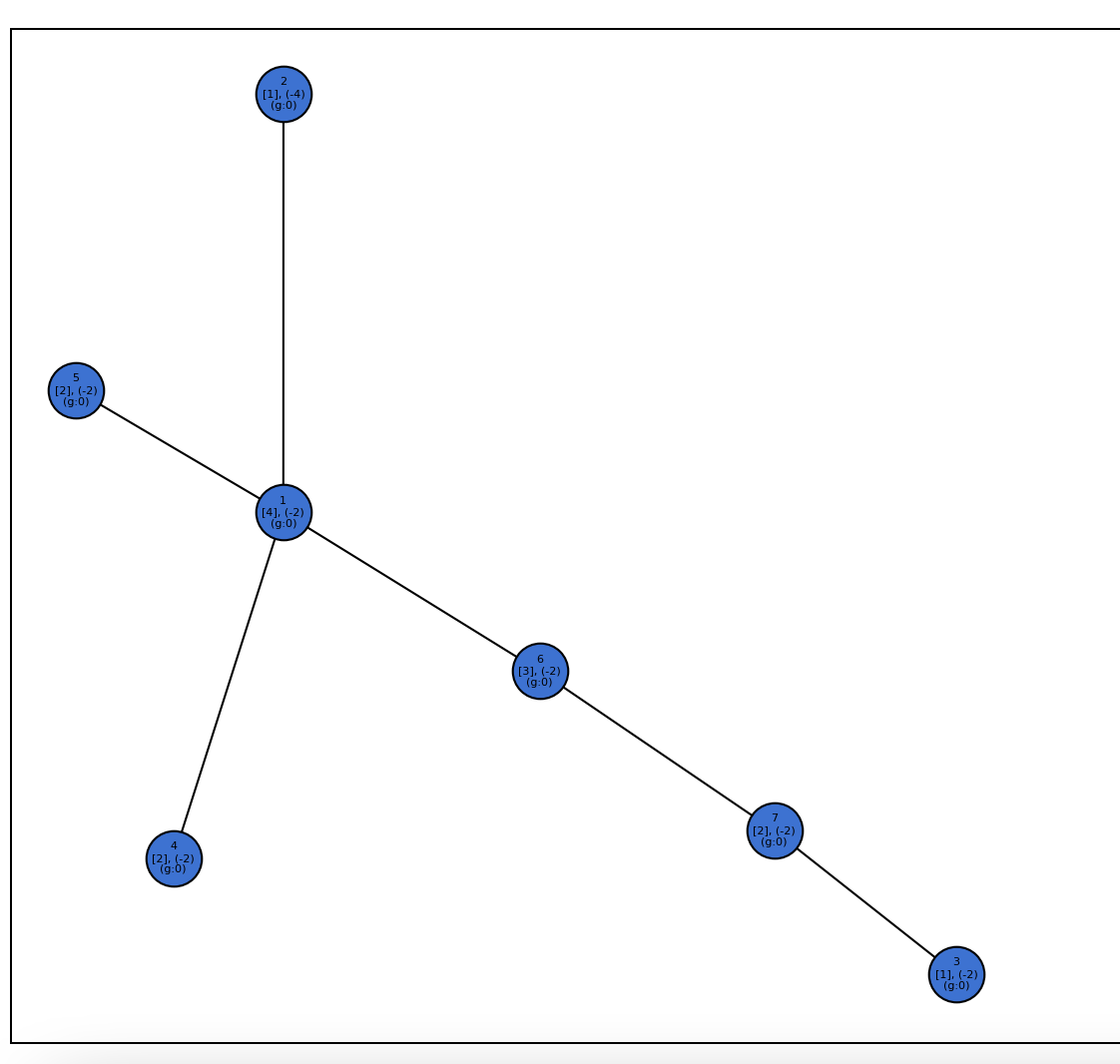}
    \caption{The minimal resolution graph of $\psi_1^6: t^6 = x(x^3-y^2)$ when arrows are compactified.}
    \label{psi1_6_c}
\end{figure}
\end{example}

As in \autoref{Magma1}, in addition to the example above, we have applied our programs \\
\noindent \verb|PrintGraphCompactified.m| and \verb|PlotGraph (Sage)| to all singularities considered in Remark \ref{exx} and recovered the expected resolution graphs in each case. We also note that, like \verb|PrintGraphwithArrows.m|, the program \verb|PrintGraphCompactified.m| is also applicable to polynomials of arbitrarily high degree. For example:

\begin{example}
    We compute the minimal resolution graph of $\psi_1^{78}: t^{78} = x(x^3-y^2)$ using \verb|PrintGraphCompactified.m| and then \verb|PlotGraph (Sage)|. We obtain a minimal graph which is precisely the minimal resolution graph of $\psi_1^6$ given in Example \ref{ex_psi2}, as expected. (Cf. Example \ref{psi_78} above, where the arrows are retained.)
\end{example}

\section{Magma Program for Isomorphisms of Closed Fibers}
\label{Isom}

In this section we provide a Magma program \verb|IsomorphismCheck.m|, available at \cite{SSG}, which could be of independent interest. It checks the isomorphism of resolution graphs of closed fibers over a specified parameter range. For a proposed period \(n\), the program first computes the resolution graphs of the closed fibers, as in \verb|PrintGraphCompactified.m|, for the parameters \(1,\dots,n\). Then it compares each subsequent graph with the graph corresponding to the same congruence class modulo \(n\). If all comparisons yield isomorphic graphs, the program confirms that, over the specified range of parameters, the graphs corresponding to the same congruence class modulo \(n\) are isomorphic.

To use the program, after downloading \verb|IsomorphismCheck.m| from \cite{SSG}, one specifies the desired input parameters. The parameter \texttt{finish} determines the upper bound of the range of exponents to be checked, namely $1 \leq k \leq \verb|finish|$, while \texttt{num} specifies the proposed period used in the graph comparisons. After specifying the input parameters, one loads the program in Magma:
\begin{lstlisting}
load "/path/to/IsomorphismCheck.m";
\end{lstlisting}

For example, the program \verb|IsomorphismCheck.m| can be applied to each of the three families in Theorem~\ref{T} by changing only the input parameters. For the family $\phi_1^k:y^2=x^5+t^k$, with range \(1\leq k \leq 1000\), one uses the following input:

\begin{lstlisting}
// ______________input________________

curve := x^5-y^2;

// k is the degree of t, for Nemethi's Algorithm 
// range of k
finish := 1000;

// period of the graphs
num    := 10;
\end{lstlisting}

For the family $\phi_3^k: y^2=x^6+t^k$, one replaces the input by:
\begin{lstlisting}
// ______________input________________

curve := x^6-y^2;

// k is the degree of t, for Nemethi's Algorithm 
// range of k
finish := 1000;

// period of the graphs
num    := 6;
\end{lstlisting}

Finally, for the family $\psi_1^k: t^k=x(x^3-y^2)$, one uses:
\begin{lstlisting}
// ______________input________________

curve := x*(x^3-y^2);

// k is the degree of t, for Nemethi's Algorithm 
// range of k
finish := 1000;

// period of the graphs
num    := 8;
\end{lstlisting}

For the $\phi_1$ family, \verb|IsomorphismCheck.m| returns:
\begin{lstlisting}
Isomorphisms HOLD for all k in [1.. 1000 ] with period  10
\end{lstlisting}
Analogous computations for the \(\phi_3\) and \(\psi_1\) families give the corresponding periodicity statements. For each of the three families \(\phi_1\), \(\phi_3\), and \(\psi_1\), \verb|IsomorphismCheck.m| performs all isomorphism checks for the range \(1\leq k \leq 1000\) in under \(10\) minutes on a standard machine. This range can be expanded at the expense of longer execution times.

\bibliographystyle{amsalpha}
\bibliography{References}

\vspace{0.2in}
\end{document}